\begin{document}
%
%
%
%
\author{Victor Cohen$^1$ and Axel Parmentier$^1$}
\title{Linear programming for Decision Processes with Partial Information}

\date{%
    $^1$Universit\'e Paris-Est, CERMICS (ENPC), Marne-la-Vall\'ee, France\\%
    \today}

\maketitle              
\begin{abstract}
Markov Decision Processes (MDPs) are stochastic optimization problems that model situations where a decision maker controls a system based on its state.
Partially observed Markov decision processes (POMDPs) are generalizations of MDPs where the decision maker has only partial information on the state of the system.
Decomposable PO\-MDPs are specific cases of POMDPs that enable one to model systems with several components. 
Such problems naturally model a wide range of applications such as predictive maintenance.
Finding an optimal policy for a POMDP is PSPACE-hard and practically challenging.
We introduce a mixed integer linear programming (MILP) formulation for POMDPs restricted to the policies that only depend on the current observation, as well as valid inequalities that are based on a probabilistic interpretation of the dependence between variables. The linear relaxation provides a good bound for the usual POMDPs where the policies depend on the full history of observations and actions.
Solving decomposable POMDPs is especially challenging due to the curse of dimensionality. 
Leveraging our MILP formulation for POMDPs, we introduce a linear program based on ``fluid formulation'' for decomposable POMDPs, that provides both a bound on the optimal value and a practically efficient heuristic to find a good policy.
Numerical experiments show the efficiency of our approaches to POMDPs and decomposable POMDPs.
\end{abstract}
%
%
%

\section{Introduction}
\label{sec:intro}

Many real-world problems where a decision maker controls a stochastic system evolving over time can be modeled as \emph{Partially Observed Markov Decision Processes} (POMDPs). In such problems, at each timestep, the system is in a \emph{state} $s$ in some finite state space $\calX_S$.
The decision maker does not observe $s$, but has access to an \emph{observation} $o$ that belongs to some finite observation space $\calX_O$, and is randomly emitted with probability $p(o|s)$.
Based on this observation, the decision-maker chooses an action $a$ from some finite action space $\calX_A$.
The system then transits randomly to a new state $s'$ in $\calX_S$ with probability $p(s'|s,a)$ and the decision maker obtains an immediate reward $r(s,a,s')$. The goal of the decision maker is to find a policy $\delta_{a | o}^t$, which represents a conditional probability of taking action $a$ in $\calX_A$ given current observation $o$ in $\calX_O$ at time $t$, maximizing the expected total reward over a finite horizon $T$
\begin{equation}\label{pb:POMDP}
    \max_{\boldsymbol{\delta} \in \Delta} \bbE_{\boldsymbol{\delta}} \bigg[ \sum_{t=1}^{T}r(S_t,A_t,S_{t+1})\bigg], 
\end{equation}
where $S_t$ and $A_t$ are random variables representing the state and the action at time $t$ and the expectation over $ \boldsymbol{\delta}$ is taken with respect to the distribution $\bbP_{\boldsymbol{\delta}}$ induced by the policy $\boldsymbol{\delta}$ chosen in the set of policies $\Delta$. In the POMDP literature, the decision maker has \emph{perfect-recall}, i.e., the decision is taken given all history of observation and actions at each time step. Hence the policy lies in a greater set of policies $\Delta_{PR} \supset \Delta$. Hence Problem \eqref{pb:POMDP} provides a lower bound on POMDP with perfect-recall. While POMDP problem with perfect-recall is PSPACE-hard \cite{Papadimitriou1987}, we restrict to Problem \eqref{pb:POMDP} in this work, which is NP-hard \cite{maua2012solving}. POMDPs are a generalization of the well-known \emph{Markov Decision Processes} (MDPs). 
POMDPs are based on Hidden Markov Models (HMMs), which give  a higher power of modeling than the usual Markov Chains (\citet{baum1966}) on which MDPs are based. 
They appear naturally in the context of predictive maintenance, where a machine evolves over time and at each time step the decision maker decides to replace or not the machine. The decision maker does not have access to the machine's wear and takes his action while observing output signals. Therefore, the goal is to find an optimal replacement policy (which can be time dependent) minimizing the total expected cost over a finite time horizon.

\noindent Like MDPs, POMDPs suffer from the \emph{curse of dimensionality}. Indeed, when the state space $\calX_S$ is large, the usual exact methods such as dynamic programming become computationally intractable (\citet{Puterman1994MDP}). Leveraging the well-known linear formulation of \citet{Epenoux1963} for MDPs, \citet{bertsimas2016decomposable} propose a tractable heuristic to approximately solve MDPs when the system is composed of several components evolving independently and the state space can be written as the Cartesian product of the individual state space of each component. Such problems are called \emph{decomposable MDPs}. 
We introduce the notion of \emph{decomposable POMDPs}, a generalization where each component evolves independently and individually as a POMDP. For example, in the case of predictive maintenance, a machine is composed of several equipments and each equipment can be modeled individually as a POMDP. Unfortunately, there is no tractable linear program on which we can leverage to generalize the approach of \citet{bertsimas2016decomposable}. A good reason for that is PSPACE-hardness of POMDP Problem (\citet{Papadimitriou1987}). The first exact algorithms for POMDP use dynamic programming (\citet{SondikPOMDP}) and consider the POMDP as a MDP where we replace the current state by a belief state, which corresponds to the probability of being in state $s$ at time $t$. However, such methods become computationally expensive even for small state spaces and small observation spaces. More recently, new policy-based solution methods leverage bounds to search efficiently in the policy space (\citet{Kaelbling1998}). Particularly, the finite-state controller algorithms (\citet{Boutilier2004}) combine a policy iteration algorithm that enumerates and evaluates policies on the observables history (represented as a new random variable) and gradient ascent method to find local optima, restricting the policy space. Leveraging policy search, \citet{Aras2007} proposed an exact mixed-integer linear program for POMDP problem. However, such a formulation is intractable even for small horizon time. In this paper, we consider exact and approximate linear formulations for Problem \eqref{pb:POMDP}. Our contributions are as follows :
\begin{enumerate}
\item We propose a mixed-integer linear program (MILP) that exactly solves Problem \eqref{pb:POMDP}. This formulation generalizes the dual linear program for MDP of \citet{Epenoux1963}. 
\item We introduce an extended formulation with new valid inequalities that improve the resolution of our mixed-integer linear program. Such inequalities come from a probabilistic interpretation of the dependence between random variables. Experiments show their efficiency.
\item We show that the linear relaxation of our MILP provides an upper bound on the usual POMDP with perfect-recall.
\item In the case of decomposable POMDPs, leveraging the MILP previously mentioned, we propose a heuristic that repeatedly solves linear approximations.
It extends to POMDPs to the fluid formulation introduced by \citet{bertsimas2016decomposable}, and strengthens it with valid inequalities.
\item Numerical experiments show the efficiency of our approach.
\end{enumerate}
The paper is organized as follows. Section \ref{sec:pomdp} presents our MILP formulation for Problem \eqref{pb:POMDP}, and introduces our valid inequalities. We also present the link between our formulation and the usual POMDP with perfect-recall. In Section \ref{sec:coupled_pomdp}, we define decomposable POMDPs, we present our tractable linear program giving an upper bound on the optimal value, and use it in a heuristic. In Section \ref{sec:num}, we present the numerical results. The proofs of the main results are available in Appendix \ref{sec:app}.
\section{Mixed Linear Programming for Partially Observed Markov Decision Processes}
\label{sec:pomdp}

In this section, we present an MILP that exactly models Problem \eqref{pb:POMDP}. Given $N$ in $\mathbb{N}$ we use the notation $[N]$ for $\{1, \ldots, N\}$. 

\subsection{Problem settings}

Let $\calX_S$, $\calX_O$, and $\calX_A$ be three finite sets corresponding respectively to the state space, the observation space and the action space.
For a state $s \in \calX_S$ and an observation $o \in \calX_O$, let $p(o|s)$ be the conditional probability of observing $o$ given state $s$ 
\begin{align*}
    &p(o|s) = \mathbb{P} \left(O_t = o | S_t = s \right) \quad \forall t \in [T].
\end{align*}
Similarly, we define the probability of transition from state $s \in \calX_S$ to $s' \in \calX_S$ while taking action $a \in \calX_A$ 
\begin{align*}
    p(s') = \mathbb{P} \left(S_{1} = s'\right), \quad \text{and}, \quad p(s'|s,a) = \mathbb{P} \left(S_{t+1} = s' | S_{t} = s, A_{t} = a \right) \quad \forall t \in [T].
\end{align*}
\noindent We define an immediate reward function $r : \calX_S \times \calX_A \times \calX_S \rightarrow \bbR$, which associates to a transition $(s,a,s')$ a reward $r(s,a,s')$. The goal is to solve Problem \eqref{pb:POMDP}, where
$$\displaystyle \Delta = \bigg\{ \boldsymbol{\delta} \in \bbR^{T \times |\calX_A| \times |\calX_O|}, \sum_{a \in \calX_A} \delta^t_{a|o} = 1 \ \mathrm{and} \ \delta^t_{a|o} \geq 0, \forall o \in \calX_O, a \in \calX_A, t \in [T] \bigg\},$$
and $\bbP_\delta$ and $\bbE_\delta$ denote the probability distribution and expectation induced by policy $\delta$ on $(S_t,O_t,A_t)_t$.
We define the set of \emph{deterministic} policies $\Delta^0$ as
$$\Delta^0 = \bigg\{ \boldsymbol{\delta} \in \Delta, \delta^t_{a|o} \in \{0,1\}, \forall o \in \calX_O : a \in \calX_A, t \in [T] \bigg\}.$$
Note that $\Delta$ is the convex hull of $\Delta^0$. Any policy in $\Delta \backslash \Delta^0$ is a \emph{randomized} policy.

\subsection{A mixed-integer linear program}

It is well-known that there always exists an optimal \emph{deterministic} policy for MDPs~\cite{Puterman1994MDP}. This result can be extended to POMDPs. Lemma 4.3 in \citet{liu2014reasoning} ensures that there always exists an optimal deterministic policy for Problem \eqref{pb:POMDP}. Equivalently,
\begin{equation*}
    \max_{\boldsymbol{\delta} \in \Delta} \bbE_{\boldsymbol{\delta}} \bigg[ \sum_{t=1}^{T} r(S_t,A_t, S_{t+1}) \bigg] = \max_{\boldsymbol{\delta} \in \Delta^0} \bbE_{\boldsymbol{\delta}} \bigg[ \sum_{t=1}^{T} r(S_t,A_t, S_{t+1})\bigg].
\end{equation*}

\noindent We introduce a collection of variables $\boldsymbol{\mu} = \big((\mu_s^t)_{s}, (\mu_{sa}^t)_{s,a}, (\mu_{soa}^t)_{s,o,a}\big)_{t}$ and the following mixed-integer linear program (MILP).
\begin{subequations}\label{pb:MILP}
\begin{alignat}{2}
\max_{\boldsymbol{\mu}, \boldsymbol{\delta}}  \enskip & \sum_{t=1}^T \sum_{\substack{s,s' \in \calX_S \\ a \in \calX_A}} r(s,a,s') p(s'|s,a) \mu_{sa}^t  & \quad &\\
\mathrm{s.t.} \enskip
 & \mu_{s}^1 = p(s) & \forall s \in \calX_S \label{eq:MILP_initial2} \\
 & \mu_{s'}^{t+1} = \sum_{s \in \calX_S,a \in \calX_A} p(s'|s,a) \mu_{sa}^t &  \forall s' \in \calX_S, t \in [T] \label{eq:MILP_consistent_state}\\
 & \mu_{sa}^{t} = \sum_{o \in \calX_O} \mu_{soa}^t & \forall s \in \calX_S, a \in \calX_A, t \in [T] \label{eq:MILP_consistent_observation}\\
 &\mu_{soa}^t \leq p(o|s) \mu_s^t & \forall s \in \calX_S, o \in \calX_O, a \in \calX_A, t \in [T] \label{eq:MILP_McCormick_1} \\
 &\mu_{soa}^t \leq \delta^t_{a|o} & \forall s \in \calX_S, o \in \calX_O, a \in \calX_A, t \in [T] \label{eq:MILP_McCormick_2} \\
 &\mu_{soa}^t \geq p(o|s) (\mu_s^t + \delta^t_{a|o} - 1) & \forall s \in \calX_S, o \in \calX_O, a \in \calX_A, t \in [T] \label{eq:MILP_McCormick_3} \\
 & \boldsymbol{\delta} \in \Delta^o \label{eq:MILP_constraint_policy} \\
 & \boldsymbol{\mu} \geq 0 & \label{eq:MILP_positivity}
\end{alignat}
\end{subequations}
We show in Appendix that a feasible solution $\boldsymbol{\mu}$ of Problem \eqref{pb:MILP} can be interpreted as a probability distribution over the random variables, i.e, $\mu_s^t$, $\mu_{sa}^t$ and $\mu_{soa}^t$ respectively represent the probabilities $\bbP_{\boldsymbol{\delta}}\big(S_t = s\big)$, $\bbP_{\boldsymbol{\delta}}\big(S_t = s, A_t=a\big)$ and $\bbP_{\boldsymbol{\delta}}\big(S_t = s, O_t=o, A_t=a\big)$ for all $s$ in $\calX_S$, $o$ in $\calX_O$, $a$ in $\calX_A$ and $t$ in $[T]$.
Let $v^*$ and $z^*$ respectively denote the optimal values of Problem \eqref{pb:POMDP} and Problem \eqref{pb:MILP}. The following theorem ensures that MILP~\eqref{pb:MILP} models Problem~\eqref{pb:POMDP}.


\begin{theo}\label{theo:MILP_optimal_solution}
    Let $(\boldsymbol{\mu}, \boldsymbol{\delta})$ be a feasible solution of Problem \eqref{pb:MILP}. Then $\boldsymbol{\mu}$ is equal to the distribution $\bbP_{\boldsymbol{\delta}}$ induced by $\boldsymbol{\delta}$, and
    $(\boldsymbol{\mu}, \boldsymbol{\delta})$ is an optimal solution of Problem \eqref{pb:MILP} if, and only if $\boldsymbol{\delta}$ is an optimal deterministic policy of Problem \eqref{pb:POMDP}. Furthermore, $v^* = z^*$.
\end{theo}

\subsection{Valid inequalities}
\label{sub:validInequalities}
The difficulty of POMDP comes from the fact that 
\begin{equation}\label{eq:strongIndep}
\text{action variable $A_t$ is independent from state $S_t$ given observation $O_t$,} 
\end{equation}
which induces non-linearities. A corollary of these independences is that
\begin{equation}\label{eq:weakIndep}
    \text{$A_t$ is independent from $S_t$ given $O_t$, $A_{t-1}$ and $S_{t-1}$.}
\end{equation}
Theorem~\ref{theo:MILP_optimal_solution} ensures that these independences are satisfied by the distribution $\bmu$ corresponding to an integer solution $(\bmu,\bdelta)$ of MILP~\eqref{pb:MILP}.
If the component $\bmu$ of a solution $(\bmu,\bdelta)$ of the linear relaxation of MILP~\eqref{pb:MILP} can still be interpreted as a distribution, independences~\eqref{eq:strongIndep} and~\eqref{eq:weakIndep} are unfortunately no longer satisfied according to this distribution.
We now introduce an extended formulation and valid inequalities that enable to restore independences~\eqref{eq:weakIndep}.

\noindent We introduce new variables $\mu_{s'a'soa}^t$ that can be interpreted as the probabilities
$$\bbP(S_{t-1} = s', A_{t-1} = a', S_t=s, O_t=o, A_t = a).$$
Consider the following equalities
\begin{subequations}\label{eq:Valid_cuts}
    \begin{alignat}{2}
    &\sum_{s'\in \calX_S, a' \in \calX_A} \mu_{s'a'soa}^t = \mu_{soa}^{t}, &\forall s \in \calX_S, o \in \calX_O, a \in \calX_A, \label{eq:Valid_cuts_consistency1}\\
    &\sum_{a \in \calX_A} \mu_{s'a'soa}^t = p(o|s)p(s|s',a')\mu_{s'a'}^{t-1}, \quad &\forall s',s \in \calX_S, o \in \calX_O, a' \in \calX_A, \label{eq:Valid_cuts_consistency2}\\
    &\mu_{s'a'soa}^t = p(s|s',a',o)\sum_{\overline{s} \in \calX_S} \mu_{s'a'\overline{s}oa}^t, &\forall s',s \in \calX_S, o \in \calX_O, a',a \in \calX_A, \label{eq:Valid_cuts_main}
    \end{alignat}
\end{subequations}

\noindent where $$p(s|s',a',o) = \bbP(S_t=s | S_{t-1}=s',A_{t-1}=a',O_t=o).$$
Note that $p(s|s',a',o')$ does not depend on the policy $\boldsymbol{\delta}$ and can be easily computed using Bayes rules. Therefore, constraints in \eqref{eq:Valid_cuts} are linear.
\begin{prop}\label{prop:valid_cuts}
    Equalities \eqref{eq:Valid_cuts} are valid for Problem \eqref{pb:MILP}, and there exists solution $\mu$ of the linear relaxation of~\eqref{pb:MILP} that does not satisfy constraints~\eqref{eq:Valid_cuts}. 
\end{prop}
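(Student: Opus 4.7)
The plan is to read each of \eqref{eq:Valid_cuts_consistency1}--\eqref{eq:Valid_cuts_main} as an identity on the distribution $\bbP_{\bdelta}$ induced by $\bdelta$, and then invoke Theorem~\ref{theo:MILP_optimal_solution} to transfer the identities to every integer feasible $\bmu$. Concretely, I would interpret the new variables as
\[
\mu_{s'a'soa}^t = \bbP_{\bdelta}\bigl(S_{t-1}=s',\, A_{t-1}=a',\, S_t=s,\, O_t=o,\, A_t=a\bigr),
\]
and derive each equality from the graphical structure of the POMDP. Equation~\eqref{eq:Valid_cuts_consistency1} is pure marginalization over $(S_{t-1},A_{t-1})$. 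For \eqref{eq:Valid_cuts_consistency2}, I would use the chain rule together with the conditional independences $O_t \perp (S_{t-1},A_{t-1}) \mid S_t$ (observations are emitted only from the current state) and $S_t \perp A_{t-1} \mid (S_{t-1},A_{t-1})$ (Markov transition), giving
\[
\bbP_{\bdelta}(S_{t-1}=s',A_{t-1}=a',S_t=s,O_t=o) = p(o|s)\, p(s|s',a')\, \mu_{s'a'}^{t-1};
\]
summing out $A_t$ on the left yields \eqref{eq:Valid_cuts_consistency2}. Finally, \eqref{eq:Valid_cuts_main} is the weak independence \eqref{eq:weakIndep} applied in the form $\bbP_{\bdelta}(S_t=s\mid S_{t-1}=s',A_{t-1}=a',O_t=o,A_t=a) = p(s|s',a',o)$ (since $A_t$ is a function of $O_t$ under a deterministic policy), multiplied by the marginal $\sum_{\bar s}\mu_{s'a'\bar sao}^t=\bbP_{\bdelta}(S_{t-1}=s',A_{t-1}=a',O_t=o,A_t=a)$.

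For the second claim, I would exhibit a small fractional $(\bmu,\bdelta)$ in the linear relaxation of~\eqref{pb:MILP} that admits no extension to auxiliary variables $\mu_{s'a'soa}^t$ satisfying \eqref{eq:Valid_cuts}. A minimal instance with $|\calX_S|=|\calX_O|=|\calX_A|=2$ and $T=2$ suffices: take emissions that correlate $O_t$ with $S_t$ but are not perfectly informative, let $\bdelta^t_{a|o}=1/2$, and place $\mu_{soa}^t$ strictly inside the McCormick envelope \eqref{eq:MILP_McCormick_1}--\eqref{eq:MILP_McCormick_3}, so that $\mu_{soa}^t \neq p(o|s)\mu_s^t\delta^t_{a|o}$. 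Then the marginals forced on $\sum_a \mu_{s'a'soa}^t$ by \eqref{eq:Valid_cuts_consistency2} are the genuine POMDP probabilities, while \eqref{eq:Valid_cuts_consistency1} demands that they sum to $\mu_{soa}^t$ over $(s',a')$; a direct counting argument shows the two are inconsistent for the chosen $\mu_{soa}^t$.

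The routine half is the validity part, which is essentially bookkeeping in the underlying Bayesian network. The delicate step is the second half: I need a concrete fractional $\bmu$ that both lies in the relaxation (hence satisfies the McCormick and marginal consistency constraints) \emph{and} is provably non-extendible to the lifted variables under \eqref{eq:Valid_cuts}. My expected difficulty is choosing the emission and transition probabilities so that the inconsistency between \eqref{eq:Valid_cuts_consistency1}, \eqref{eq:Valid_cuts_consistency2} and \eqref{eq:Valid_cuts_main} can be exhibited by a short closed-form computation rather than numerical search.
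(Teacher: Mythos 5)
Your validity argument is, in substance, the paper's own: the appendix defines the lifted variables explicitly as $\mu_{s'a'soa}^t := \delta^t_{a|o}\,p(o|s)\,p(s|s',a')\,\mu_{s'a'}^{t-1}$ and checks the three equalities algebraically, which is exactly your probabilistic reading of $\mu^t_{s'a'soa}$ as $\bbP_{\bdelta}(S_{t-1}=s',A_{t-1}=a',S_t=s,O_t=o,A_t=a)$ once Theorem~\ref{theo:MILP_optimal_solution} identifies $\bmu$ with $\bbP_{\bdelta}$. That half is correct and takes the same route.

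The second half has a genuine gap (the paper, for what it is worth, gives no proof of this part, so you are attempting more than the appendix does). The condition you impose on the fractional point --- that $\mu^t_{soa}\neq p(o|s)\mu_s^t\delta^t_{a|o}$ for some individual $(s,o,a)$ --- is not enough to run your ``counting argument'': constraints \eqref{eq:Valid_cuts_consistency1} and \eqref{eq:Valid_cuts_consistency2} only pin down \emph{sums} of the lifted variables, so a point can violate the product form coordinatewise and still be extendible. What the two families jointly force (sum \eqref{eq:Valid_cuts_consistency1} over $a$, sum \eqref{eq:Valid_cuts_consistency2} over $(s',a')$, and use \eqref{eq:MILP_consistent_state}) is the aggregate identity $\sum_{a}\mu^t_{soa}=p(o|s)\mu^t_s$ for $t\ge 2$, and \emph{that} is the identity your fractional point must violate. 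It can: with $\delta^t_{a|o}=1/2$ and two actions, the McCormick inequalities \eqref{eq:MILP_McCormick_1}--\eqref{eq:MILP_McCormick_3} only require $\mu^t_{soa}\in\big[p(o|s)(\mu^t_s-\tfrac12),\,\min(p(o|s)\mu^t_s,\tfrac12)\big]$, so taking a deterministic transition with $\mu^2_s=1$ for a single $s$ and $p(o|s)=1/2$, the choice $\mu^2_{soa}=1/2$ for both actions is feasible (the relaxation imposes no constraint tying $\sum_{o,a}\mu^t_{soa}$ back to $\mu^t_s$) yet gives $\sum_a\mu^2_{soa}=1\neq p(o|s)\mu^2_s=1/2$, so no lifted variables satisfying \eqref{eq:Valid_cuts} exist. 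You need to state such an instance explicitly and make the violated identity the aggregate one; as written, the ``direct counting argument'' is asserted rather than proved, and is aimed at the wrong target.
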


\noindent Hence, constraints \eqref{eq:Valid_cuts} strengthen the linear relaxation. Numerical experiments in Section \ref{sec:num} show the efficiency of such valid inequalities.

\begin{rem}\label{rem:numerical_aspects}
    Our extended formulation with valid inequalities has many more constraints than the initial one.
    Its linear relaxation therefore takes longer to solve.
\end{rem}

\subsection{Link with perfect-recall}
\label{subsec:link_perfect_recall}

In the previous section, we assumed the decision maker forgets all history of information. It would be interested to measure the policy obtained by solving Problem \eqref{pb:MILP} against the POMDP with perfect-recall. Let $v_{PR}^*$ be the optimal value of perfect-recall POMDP, i.e., by optimizing the expectation of \eqref{pb:POMDP} over $\Delta_{PR}$. Let $z_{LR}^*$ be the value of the linear relaxation of MILP \eqref{pb:MILP}.
\begin{theo}\label{theo:link_perfect_recall}
	The linear relaxation of MILP \eqref{pb:MILP} is an upper bound on the POMDP value with perfect-recall :
	
	\begin{equation}\label{pb:POMDP_PR}
    z^* \leq v_{PR}^* \leq z_{LR}^*
\end{equation}
\end{theo}
The proof is based on the influence diagram representation of POMDP problem. The authors in \citet[e.g. Theorem 14]{AParmentier2019} prove that the linear realaxation of our MILP is equivalent to the MDP relaxation of POMDP. Therefore, the integrity gap obtained by solving MILP \eqref{pb:MILP} bounds the gap with the value of perfect-recall POMDP. By adding equalities \eqref{eq:Valid_cuts}, we obtain a better gap.


\section{Decomposable Partially Observed Markov Decision Processes}
\label{sec:coupled_pomdp}

Due to the curse of dimensionality, 
some applications have exponentially large $\calX_S$ and cannot be solved using MILP~\eqref{pb:MILP}.
In this section, we propose a tractable heuristic for POMDPs with  large but decomposable state space $\calX_S$.

\subsection{Problem settings}

We now introduce the notion of \emph{decomposable POMDP}. We consider a system that can be decomposed into $M$ components, so that the system state space $\calX_S$ and observation space $\calX_O$ can be written as the Cartesian product of individual state spaces $\calX_S^m$ and observation spaces $\calX_O^m$ for $m \in [M]$, i.e, $\calX_S = \calX_S^1 \times \cdots \times \calX_S^M$ and $\calX_O = \calX_O^1 \times \cdots \times \calX_O^M$. 
Let $S_t^m$ and $O_t^m$ be the random variables that represent the state and the observation of component $m \in [M]$ at time $t \in [T]$,
and $\mathbf{S}_t = \big( S_t^1, \ldots, S_t^M \big)$ and $\mathbf{O}_t = \big( O_t^1, \ldots, O_t^M \big)$ the state and observation of the complete system. 
At each time step $t$, a component $m$ is in state $S_t^m = s$, and emits an observation $O_t^m=o$ with probability $\bbP(O_t^m = o|S_t^m=s) = p_m(o|s)$.
Then, based on the observations of the full system $\mathbf{O}_t = \big( O_t^1, \ldots, O_t^M \big)$, the decision maker takes an action $A_t = a$.
Each component $m$ then evolves independently from state $S_t^m =s$ to state $S_{t+1}^{m+1}=s'$ with probability $p_m(s'|s,a)$, and the decision maker perceives reward $r_m(s^m, a, s'^m)$.

\noindent To sum things up, a decomposable POMDP is a POMDP with state space $\calX_S = \calX_S^1 \times \cdots \times \calX_S^M$, observation space $\calX_O = \calX_O^1 \times \cdots \times \calX_O^M$, action space $\calX_A$, and such that the probabilities of emission factorize as 
\begin{align*}
	&\bbP(\mathbf{O}_t = \mathbf{o} | \mathbf{S}_t = \mathbf{s}) =\prod_{m=1}^M p_m(o^m | s^m), \quad \forall t \in [T],
\end{align*}
the probabilities of transition factorize as
\begin{align*}
\bbP(\mathbf{S}_{1} = \mathbf{s}) = \prod_{m=1}^M p_m(s^m), \quad \text{and} \quad \bbP(\mathbf{S}_{t+1} = \mathbf{s'} | \mathbf{S}_t=\mathbf{s}, A_t=a) = \prod_{m=1}^M p_m(s'^m |s^m, a),
\end{align*}
and the reward decomposes additively
\begin{align*}
	r(\mathbf{s},a,\mathbf{s}') = \sum_{m=1}^M r_m(s^m, a, s'^m).
\end{align*}
The goal is still to find a policy maximizing the total expected reward
\begin{equation}\label{pb:decPOMDP}
	\max_{\boldsymbol{\delta} \in \Delta} \bbE_{\boldsymbol{\delta}} \bigg[ \sum_{t=1}^{T} \sum_{m=1}^M r_m(S_t^m,A_t, S_{t+1}^m) \bigg].
\end{equation}

\subsection{Linear formulation}

As a decomposable POMDP is a POMDP, we can solve Problem~\eqref{pb:decPOMDP} using MILP \eqref{pb:MILP}.
However, the number of constraints and variables grows exponentially with~$M$, and even the linear relaxation of MILP \eqref{pb:MILP} becomes quickly intractable.
We propose a heuristic that repeatedly solves a tractable linear program. 
We introduce new variables $\boldsymbol{\tau} = \Big( \big( (\tau_s^{t,m})_{s}, (\tau_{sa}^{t,m})_{s,a}, (\tau_{soa}^{t,m})_{s,o,a}\big)_{m \in [M]}, \tau_a^t \Big)_{t}$ and the following linear program.
\newpage
\begin{subequations}\label{pb:decPOMDP_LP}
 \begin{flalign}
 \quad \max_{\boldsymbol{\tau}} \quad &\displaystyle \sum_{t=1}^T \sum_{m=1}^M \sum_{\substack{s^m,s'^m \in \calX_S^m \\ a \in \calX_A}} r_m(s^m,a,s'^m)p(s'^m|s^m,a)\tau_{sa}^{t,m}&
 \end{flalign}
\vspace{-0.4cm}
 \begin{alignat}{2}
 \mathrm{s.t.} \enskip &\tau_{s'}^{t+1,m} = \sum_{\substack{s \in \calX_S^m \\a \in \calX_A}} p_m(s'|s,a) \tau_{sa}^{t,m} & \forall s' \in \calX_S^m, m \in [M], t \in [T] \label{eq:LPdecPOMDPs_cons_state}\\
 &\tau_{sa}^{t,m} = \sum_{o \in \calX_O^m} \tau_{soa}^{t,m} & \forall s \in \calX_S^m, a \in \calX_A, m \in [M], t \in [T] \label{eq:LPdecPOMDPs_cons_observation} \\
 &\sum_{s \in \calX_S^m}\tau_{sa}^{t,m} = \tau_a^t & \forall a \in \calX_A, m \in [M], t \in [T] \label{eq:LPdecPOMDPs_cons_action} \\
 &\sum_{a \in \calX_A}\tau_{soa}^{t,m} = p_m(o|s)\tau_{s}^{t,m} & \forall s \in \calX_S^m, o \in \calX_O^m, m \in [M], t \in [T] \label{eq:LPdecPOMDPs_indep_observation} \\
 & \tau_{s}^{1,m} = p_m(s) & \forall s \in \calX_S^m, m \in [M] \label{eq:LPdecPOMDPs_initial2} \\
 &\tau \geq 0 \label{eq:LPdecPOMDPs_positivity}
 \end{alignat}
\end{subequations}
We show in Appendix that $(\tau_s^{t,m})_{s}$, $(\tau_{sa}^{t,m})_{s,a}$, $(\tau_{soa}^{t,m})_{soa}$, and $(\tau_a^t)_a$ can still be interpreted as probability distributions on $\calX_S$, $\calX_{S,m} \times \calX_{A}$, $\calX_{S,m} \times \calX_{O,m} \times \calX_{A}$, and $\calX_A$, which coincide when marginalized on the intersection of their domains.
However, there is no guarantee that there exists a joint distribution on $(\calX_S \times \calX_O \times \calX_A)^T$ from which they can be derived as marginal distributions.
Let $v_M^*$ and $z_M^*$ be respectively the optimal values of Problem \eqref{pb:decPOMDP} and Problem \eqref{pb:decPOMDP_LP}.

\begin{theo} \label{theo:relaxation_dec_POMDPs}
	Problem \eqref{pb:decPOMDP_LP} is a relaxation of Problem \eqref{pb:decPOMDP}, and $v^*_M \leq z^*_M.$
\end{theo}
This inequality is not an equality in general, as we will see in the numerical experiments.
Linear formulation \eqref{pb:decPOMDP_LP} is a generalization to decomposable POMDPs of the ``fluid'' formulation proposed by \citet{bertsimas2016decomposable} for decomposable MDPs. 

The quality of the bound $z^*_M$ can be improved by generalizing the valid inequalities introduced in Section~\ref{sub:validInequalities}.
We introduce new variables $\tau_{s'a'soa}^{t,m}$ that can be interpreted as the probability $$\bbP(S_{t-1}^m=s', A_{t-1}=a',S_t^m=s,O_t^m=o,A_t=a).$$ 
Consider the following linear inequalities.
\begin{subequations}\label{eq:dec_Valid_cuts}
    \begin{alignat}{2}
    &\sum_{s'\in \calX_S, a' \in \calX_A} \tau_{s'a'soa}^{t,m} = \tau_{soa}^{t,m},& \forall s \in \calX_S^m, o \in \calX_O^m, a \in \calX_A, \label{eq:dec_Valid_cuts_consistency1} \\
    &\sum_{a \in \calX_A} \tau_{s'a'soa}^{t,m} = p_m(o|s)p_m(s|s',a')\tau_{s'a'}^{t-1,m}, & \forall s,s' \in \calX_S^m, o \in \calX_O^m, a' \in \calX_A, \label{eq:dec_Valid_cuts_consistency2} \\
    &\tau_{s'a'soa}^{t,m} = p_m(s|s',a',o)\sum_{\overline{s} \in \calX_S} \tau_{s'a'\overline{s} oa}^{t,m}, \quad \quad &\forall s,s' \in \calX_S^m, o \in \calX_O^m,a,a' \in \calX_A. \label{eq:dec_Valid_cuts_main}
    \end{alignat}
\end{subequations} 
\begin{prop}
Theorem~\ref{theo:relaxation_dec_POMDPs} remains true if we add equalities~\eqref{eq:dec_Valid_cuts} to \eqref{pb:decPOMDP_LP} for all $m \in [M]$ and $t \in [T]$. 
There are solutions of~\eqref{pb:decPOMDP_LP} that do not satisfy \eqref{eq:dec_Valid_cuts}.
\end{prop}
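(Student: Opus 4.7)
\medskip

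\noindent\textbf{Proof plan.}
The plan is to adapt to the decomposable setting the argument used for Proposition~\ref{prop:valid_cuts}. For the relaxation claim it suffices to show that any policy $\bdelta \in \Delta$ induces, through the true marginals of the distribution $\bbP_{\bdelta}$, a feasible solution of the strengthened LP whose objective value equals the expected reward of $\bdelta$ under Problem~\eqref{pb:decPOMDP}. Concretely, I set
$$\tau_{s'a'soa}^{t,m} := \bbP_{\bdelta}\bigl(S_{t-1}^m=s',\,A_{t-1}=a',\,S_t^m=s,\,O_t^m=o,\,A_t=a\bigr),$$
and similarly for $\tau_s^{t,m}, \tau_{sa}^{t,m}, \tau_{soa}^{t,m}, \tau_a^t$, which one already knows to satisfy \eqref{eq:LPdecPOMDPs_cons_state}--\eqref{eq:LPdecPOMDPs_positivity} (this is the content of Theorem~\ref{theo:relaxation_dec_POMDPs}). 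The task reduces to checking that this marginal $\btau$ also satisfies the three families of equalities in \eqref{eq:dec_Valid_cuts}.

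Identity \eqref{eq:dec_Valid_cuts_consistency1} is immediate: summing $\tau_{s'a'soa}^{t,m}$ over $(s',a')$ marginalises out the $t-1$ coordinates and returns $\bbP_{\bdelta}(S_t^m=s,O_t^m=o,A_t=a)=\tau_{soa}^{t,m}$. Identity \eqref{eq:dec_Valid_cuts_consistency2} follows from the per-component factorisations of emissions and transitions of the decomposable POMDP: marginalising $\tau_{s'a'soa}^{t,m}$ over $a$ yields $\bbP_{\bdelta}(S_{t-1}^m=s',A_{t-1}=a',S_t^m=s,O_t^m=o)$, and the factorisations imply $\bbP_{\bdelta}(S_t^m=s\mid S_{t-1}^m=s',A_{t-1}=a')=p_m(s\mid s',a')$ and $\bbP_{\bdelta}(O_t^m=o\mid S_t^m=s)=p_m(o\mid s)$, so the right-hand side of \eqref{eq:dec_Valid_cuts_consistency2} is recovered.

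The main obstacle is \eqref{eq:dec_Valid_cuts_main}, where one must establish the conditional independence
$$S_t^m \;\perp\; A_t \;\Big|\; \bigl(S_{t-1}^m,\,A_{t-1},\,O_t^m\bigr),$$
so that the conditional $\bbP_{\bdelta}(S_t^m=s\mid S_{t-1}^m=s',A_{t-1}=a',O_t^m=o,A_t=a)$ is independent of $a$ and, by Bayes, equal to $p_m(s\mid s',a',o)$. The subtlety is that $A_t$ is a function of the full observation vector $\mathbf{O}_t=(O_t^1,\dots,O_t^M)$, and the components $O_t^{m'}$ for $m'\neq m$ depend on the unobserved $S_{t-1}^{m'}$. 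The argument is that, given $(S_{t-1}^m=s',A_{t-1}=a')$, the variable $S_t^m$ depends only on the intrinsic transition noise of component $m$, which is independent of all the randomness driving the other components and of the policy's randomisation; hence $S_t^m \perp \mathbf{O}_t^{\neg m} \mid (S_{t-1}^m,A_{t-1})$. Further conditioning on $O_t^m$ (a function of $S_t^m$ and the intrinsic emission noise of $m$) preserves this independence because it only refines the conditioning on the internal randomness of component $m$. Since $A_t$ is a deterministic function of $(\mathbf{O}_t,U_t)$ for some independent randomisation $U_t$, the desired conditional independence follows, and \eqref{eq:dec_Valid_cuts_main} is obtained by dividing by the conditional marginal of $O_t^m$.

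Finally, for the separation statement, the single-component case $M=1$ of~\eqref{pb:decPOMDP_LP} coincides (after identifying $\tau_a^t$ with the observation marginal of $\mu_{soa}^t$) with the linear relaxation of MILP~\eqref{pb:MILP}, so any fractional solution exhibited in the proof of Proposition~\ref{prop:valid_cuts} that violates \eqref{eq:Valid_cuts} lifts to a solution of~\eqref{pb:decPOMDP_LP} that violates \eqref{eq:dec_Valid_cuts}. This establishes both assertions of the proposition.
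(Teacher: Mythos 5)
The paper does not actually spell out a proof of this proposition in the appendix, so I compare your argument with the adjacent proofs it would naturally be assembled from. For the first assertion your proof is correct: defining the extended variables as the true marginals of $\bbP_{\boldsymbol{\delta}}$ and checking \eqref{eq:dec_Valid_cuts} directly is the probabilistic counterpart of the paper's algebraic verification of \eqref{eq:Valid_cuts} (which starts from the ansatz $\mu^t_{s'a'soa}:=\delta^t_{a|o}p(o|s)p(s|s',a')\mu^{t-1}_{s'a'}$) combined with the component-wise marginalization used to prove Theorem~\ref{theo:relaxation_dec_POMDPs}. You also correctly identify and resolve the one step that is not routine in the decomposable setting: $A_t$ depends on the whole vector $\mathbf{O}_t$, not just on $O_t^m$, so \eqref{eq:dec_Valid_cuts_main} requires $S_t^m\perp A_t\mid(S_{t-1}^m,A_{t-1},O_t^m)$, which you derive from the fact that the transition and emission kernels factorize across components, giving $(S_t^m,O_t^m)\perp \mathbf{O}_t^{-m}\mid (S_{t-1}^m,A_{t-1})$. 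This is sound.

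The second assertion is where there is a genuine gap. Your reduction rests on two things that fail. First, it invokes a fractional solution ``exhibited in the proof of Proposition~\ref{prop:valid_cuts}''; no such solution is exhibited there (that proof only establishes validity), so you are citing an example that has not been constructed, by you or by the paper. Second, the $M=1$ case of \eqref{pb:decPOMDP_LP} does not coincide with the linear relaxation of MILP~\eqref{pb:MILP}: formulation \eqref{pb:decPOMDP_LP} imposes the equality \eqref{eq:LPdecPOMDPs_indep_observation}, $\sum_{a}\tau^{t,m}_{soa}=p(o|s)\tau^{t,m}_{s}$, which the McCormick constraints \eqref{eq:MILP_McCormick_1}--\eqref{eq:MILP_McCormick_3} do not imply once $\boldsymbol{\delta}$ is fractional (summing \eqref{eq:MILP_McCormick_3} over $a$ only yields the lower bound $p(o|s)\left(|\calX_A|\mu^t_{s}+1-|\calX_A|\right)$), and conversely \eqref{pb:decPOMDP_LP} carries no $\boldsymbol{\delta}$ variables or McCormick envelopes. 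Hence a point violating the valid inequalities in one polytope need not even be feasible for the other, and the claimed lift does not go through. To close the gap you need an explicit feasible $\boldsymbol{\tau}$ of \eqref{pb:decPOMDP_LP} admitting no nonnegative extension $\tau^{t,m}_{s'a'soa}$ satisfying \eqref{eq:dec_Valid_cuts}; a convenient route is to aggregate $\nu^{t,m}_{s'a'oa}=\sum_{\bar{s}}\tau^{t,m}_{s'a'\bar{s}oa}$, observe that \eqref{eq:dec_Valid_cuts} is then equivalent to a transportation feasibility problem with prescribed marginals $\sum_{a}\nu^{t,m}_{s'a'oa}=\big(\sum_{\bar s}p_m(o|\bar s)p_m(\bar s|s',a')\big)\tau^{t-1,m}_{s'a'}$ and $\sum_{s',a'}p_m(s|s',a',o)\,\nu^{t,m}_{s'a'oa}=\tau^{t,m}_{soa}$, and construct a small instance where these marginals are incompatible.
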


Since decomposable POMDPs are POMDPs, we would like to compare our policies with optimal value of POMDP with perfect-recall. Let $v_{PR}^*$ be the optimal value of POMDP with perfect-recall, i.e., the value of the expectation in \eqref{pb:decPOMDP} by optimizing over $\Delta_{PR}$.
\begin{theo}\label{theo:link_perfect_recall}
	The linear formulation \eqref{pb:decPOMDP_LP} gives an upper bound on the POMDP value with perfect-recall, i.e., $v_{PR}^* \leq z_{M}^*$.	
\end{theo}

\subsection{Heuristic}

Consider an optimal solution $\boldsymbol{\tau}$ of Problem \eqref{pb:decPOMDP_LP}. Solution $\boldsymbol{\tau}$ is \emph{achievable} if there exists a policy $\boldsymbol{\delta} \in \Delta$ such that
\begin{align}\label{eq:sufficient_condition}
	&\delta^t_{a|\textbf{o}} = \frac{\tau_{soa}^{t,m}}{\displaystyle \sum_{\overline{a} \in \calX_A} \tau_{so\overline{a}}^{t,m}}, && \forall s \in \calX_S^m, \mathbf{o} \in \calX_O, a \in \calX_A, m \in [M], t \in [T]. 
\end{align}
The following proposition generalizes Proposition 3 of \citet{bertsimas2016decomposable}.

\begin{prop}\label{prop:equality_dec_POMDPs}
	Let $\boldsymbol{\tau}$ be an optimal solution of Problem \eqref{pb:decPOMDP_LP}.
	If $\boldsymbol{\tau}$ is achievable, then
	 $$v_M^*=z_M^*.$$
\end{prop}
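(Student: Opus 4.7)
The plan is to leverage Theorem~\ref{theo:relaxation_dec_POMDPs}, which already gives $v_M^* \leq z_M^*$, and then construct from the achievable optimum $\boldsymbol{\tau}$ a policy $\boldsymbol{\delta}$ whose expected reward is exactly $z_M^*$, thereby forcing equality. Concretely, I would take $\boldsymbol{\delta}$ to be the policy in $\Delta$ granted by achievability, i.e.\ the one satisfying \eqref{eq:sufficient_condition}. The first small check is that this $\boldsymbol{\delta}$ is a well-defined conditional probability: summing the right-hand side of \eqref{eq:sufficient_condition} over $a \in \calX_A$ gives $\sum_a \tau_{soa}^{t,m}/(p_m(o|s)\tau_s^{t,m})$, which equals $1$ by constraint~\eqref{eq:LPdecPOMDPs_indep_observation}, so $\boldsymbol{\delta}$ is a legitimate policy.

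The core of the argument is an induction on $t$ showing that under $\boldsymbol{\delta}$ the marginals of the induced distribution coincide with the LP variables on each component, namely
\begin{align*}
\bbP_{\boldsymbol{\delta}}(S_t^m = s) &= \tau_s^{t,m}, \\
\bbP_{\boldsymbol{\delta}}(S_t^m = s, A_t = a) &= \tau_{sa}^{t,m}, \\
\bbP_{\boldsymbol{\delta}}(S_t^m = s, O_t^m = o, A_t = a) &= \tau_{soa}^{t,m}.
\end{align*}
The base case $t=1$ follows from \eqref{eq:LPdecPOMDPs_initial2} and the factorization $\bbP(\mathbf{S}_1 = \mathbf{s}) = \prod_m p_m(s^m)$. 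For the inductive step, I would first use the emission factorization to write $\bbP_{\boldsymbol{\delta}}(S_t^m = s, O_t^m = o) = p_m(o|s)\bbP_{\boldsymbol{\delta}}(S_t^m = s) = p_m(o|s)\tau_s^{t,m}$, then compute
\begin{align*}
\bbP_{\boldsymbol{\delta}}(S_t^m = s, O_t^m = o, A_t = a) = \sum_{\mathbf{o}:\, o^m = o} \bbP_{\boldsymbol{\delta}}(S_t^m = s, \mathbf{O}_t = \mathbf{o})\, \delta^t_{a|\mathbf{o}},
\end{align*}
which, since $\delta^t_{a|\mathbf{o}} = \tau_{soa}^{t,m}/(p_m(o|s)\tau_s^{t,m})$ is constant in $\mathbf{o}$ once $o^m = o$ and $s$ are fixed, collapses to $p_m(o|s)\tau_s^{t,m}\cdot \tau_{soa}^{t,m}/(p_m(o|s)\tau_s^{t,m}) = \tau_{soa}^{t,m}$. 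Summing over $o$ and using~\eqref{eq:LPdecPOMDPs_cons_observation} gives the $(s,a)$ marginal, and then advancing through the Markov kernel via the decomposable transition $p_m(s'|s,a)$ together with~\eqref{eq:LPdecPOMDPs_cons_state} yields the state marginal at $t+1$.

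Once the marginals coincide, the expected reward under $\boldsymbol{\delta}$ matches the LP objective term by term:
\begin{equation*}
\bbE_{\boldsymbol{\delta}}\!\bigg[\sum_{t=1}^T\sum_{m=1}^M r_m(S_t^m, A_t, S_{t+1}^m)\bigg]
= \sum_{t,m,s,s',a} r_m(s,a,s')\, p_m(s'|s,a)\, \tau_{sa}^{t,m} = z_M^*,
\end{equation*}
so $v_M^* \geq z_M^*$, and combined with Theorem~\ref{theo:relaxation_dec_POMDPs} we conclude $v_M^* = z_M^*$. The main obstacle I anticipate is the inductive step, and specifically the point where one evaluates $\sum_{\mathbf{o}:\,o^m=o}\bbP_{\boldsymbol{\delta}}(S_t^m = s, \mathbf{O}_t = \mathbf{o})\delta^t_{a|\mathbf{o}}$: one must genuinely use that the achievability relation~\eqref{eq:sufficient_condition} makes the ratio independent of the coordinates $o^{m'}$ for $m'\neq m$ (otherwise the factor could not be pulled out of the sum), and one must carefully invoke the decomposability of the emission and transition kernels so that the component-$m$ computation does not mix with the other components' dynamics.
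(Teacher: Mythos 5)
Your proof is correct and follows essentially the same route as the paper: both construct the distribution induced by the achieving policy $\boldsymbol{\delta}$, match it against $\boldsymbol{\tau}$, conclude the expected reward equals $z_M^*$, and combine this with Theorem~\ref{theo:relaxation_dec_POMDPs}. In fact your inductive marginal-matching argument (and the observation that achievability lets you pull $\delta^t_{a|\mathbf{o}}$ out of the sum over the fiber $\{o^m=o\}$) makes explicit the step the paper summarizes as ``with the same expected reward,'' so your write-up is, if anything, the more complete one.
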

\noindent In the remainder of this section, we assume that the decision maker observes the initial observation $\tilde{\mathbf{o}}$ in $\calX_O$. This assumption imposes conditioning all probabilities on the event $\big\{\mathbf{O}_1 = \tilde{\mathbf{o}} \big\}$ in Problem \eqref{pb:decPOMDP} and Problem \eqref{pb:decPOMDP_LP}. Therefore, it suffices to replace $p_m(o|s)$ by $\mathds{1}_{\tilde{o}^m}(o)$ at time $t=1$
where $\mathds{1}_{y}(x)$ is the indicator function of $y$ evaluated in $x$.

\begin{theo}\label{theo:Initial_deterministic_pol}
	Let $\boldsymbol{\tau}$ be an optimal solution of Problem \eqref{pb:decPOMDP_LP} with an initial observation $\tilde{\mathbf{o}} \in \calX_O$. Suppose that $\boldsymbol{\tau}$ is achievable and let $\boldsymbol{\delta}$ be the policy achieving $\boldsymbol{\tau}$. We define the initial policy $\overline{\delta}^1$ :
	\begin{align}\label{eq:opt_init_policy}
		&\overline{\delta}_{a^*|\tilde{\mathbf{o}}}^1 = 1 \ \mathrm{if} \ a^* = \argmax_{a \in \calX_A} \tau_a^1
	\end{align}
	Then the policy $(\overline{\delta}^1, \delta^2, \ldots, \delta^T)$ is optimal for Problem \eqref{pb:decPOMDP}.
\end{theo}

\noindent Under condition \eqref{eq:sufficient_condition}, Theorem \ref{theo:Initial_deterministic_pol} says that we can take an optimal deterministic action at time $t=1$. However, an optimal solution $\boldsymbol{\tau}$ of Problem \eqref{pb:decPOMDP_LP} is not always achievable. Nevertheless, a feasible solution of Problem \eqref{pb:decPOMDP_LP} satisfying the valid inequalities \eqref{eq:dec_Valid_cuts} respects the transition of all independent components and almost all independences between random variables. Therefore, $\boldsymbol{\tau}$ may be close to being achievable. Consequently, we propose a heuristic similar to the algorithm presented in \citet{bertsimas2016decomposable}, that repeatedly solves Problem \eqref{pb:decPOMDP_LP} and selects action $\argmax_{a \in \calX_A} \tau_a^1$ (that depends on initial observation $\tilde{\mathbf{o}}$). 
Algorithm~\ref{alg:heuristic} states our heuristic policy, which we expect to provide good performances.

\begin{algorithm}[H]
\caption{Heuristic policy for Problem \eqref{pb:decPOMDP}}
\label{alg:heuristic}
\begin{algorithmic}[1]
\STATE \textbf{Input} $T$, $p_m$, $r_m$ for all $m \in [M]$, current observation $\tilde{\mathbf{o}} \in \calX_O$
\STATE Solve Problem \eqref{pb:decPOMDP_LP} with initial observation $\tilde{\mathbf{o}}_t$ to obtain an optimal solution $\boldsymbol{\tau}$.
\STATE Take action $a^* = \argmax \tau_{a}^1$.
\end{algorithmic}
\end{algorithm}

\noindent Note that each iteration of Algorithm \ref{alg:heuristic} solves a linear program with a polynomial number of constraints and variables. Numerical experiments in Section \ref{sec:num} show the efficiency of this heuristic.


\section{Numerical experiments}
\label{sec:num}

We now provide experiments showing the practical efficiency of our approaches to POMDPs and decomposable POMDPs.
All linear programs have been implemented in Julia with JuMP interface and solved using Gurobi 7.5.2. Experiments have been run on a server with 192Gb of RAM and 32 cores at 3.30GHz.

\subsection{Instances}
\label{sub:instances}
Each instance is generated by first choosing $\big \vert \calX_S \big \vert$, $\big \vert \calX_O \big \vert$, $\big \vert \calX_A \big \vert$, and finally $M$ when we consider decomposable MDPs.
We then randomly generate the initial probability $p_m(s)$, the transition probability $p_m(s'|s,a)$, the emission probability $p_m(o|s)$ and the immediate reward function $r_m(s,a,s')$ for all $m \in [M]$.

\begin{rem}
In our numerical experiments, we use instances with short horizons $T$ in $\{5,10,20\}$. This is not a limitation in practice, as the length $T$ of the horizon is not the main challenge when designing heuristics for decision processes. 
\citet{bertsimas2016decomposable} solve large or infinite horizon instances of decomposable MDPs using rolling horizon approaches. 
They obtain good performances by solving at each time steps problems on a horizon  $T = 10$. 
Similar rolling techniques can be used to adapt Algorithm~\ref{alg:heuristic} to  large or infinite horizon decomposable POMDPs. 
\end{rem}

\subsection{Simulated experiments on single POMDP}

We solve Problem \eqref{pb:MILP} with and without valid equalities \eqref{eq:Valid_cuts}. Algorithms were stopped after a Time Limit (TL) of $600$s. Table \ref{tab:random_instances_results} shows the efficiency of MILP~\eqref{pb:MILP} to solve \eqref{pb:POMDP}. The first four columns indicate the size of state space $\big\vert \calX_{S}\big\vert$, observation space $\big\vert \calX_{O}\big\vert$, action space $\big\vert \calX_{A}\big\vert$ and time horizon $T$. The fifth column indicates the mathematical program used to solve Problem \eqref{pb:MILP} with or without constraints \eqref{eq:Valid_cuts}. In the last three columns, we report the integrity gap, the final gap and the computation time for each instance. 

\begin{table}
  \begin{tabular}{|SSS|c|c|S|SSc|}
        \hline
        \multicolumn{1}{|c}{$\big\vert \calX_{S}\big\vert$} & \multicolumn{1}{c}{$\big\vert \calX_{O} \big\vert$} & \multicolumn{1}{c}{$\big\vert \calX_{A} \big\vert$} & \multicolumn{1}{c}{$T$} & \multicolumn{1}{c}{Nb. Policies} & \multicolumn{1}{c}{Prog.} & \multicolumn{1}{c}{Int. Gap (\%)} & \multicolumn{1}{c}{Final Gap (\%)} & \multicolumn{1}{c}{Time (s)}  \vline\\ \hline
        3 & 3 & 3 & 10 & \text{${10^{14}}$} & \eqref{pb:MILP} & 3.57 & Opt &  0.85 \\ \cline{6-9}
          &   &   &    &                    & \eqref{pb:MILP} \text{and} \eqref{eq:Valid_cuts} & 0.43 & Opt &  0.16 \\ \cline{4-9}
          &   &   & 20 & \text{${10^{28}}$} & \eqref{pb:MILP} & 3.53 & Opt &  11.79 \\ \cline{6-9}
          &   &   &    &                    &\eqref{pb:MILP} \text{and} \eqref{eq:Valid_cuts} & 0.22 & Opt & 0.46 \\ \hline
        3 & 4 & 4 & 10 & \text{${10^{24}}$} & \eqref{pb:MILP} & 2.74 & Opt &  1.82  \\ \cline{6-9}
          &   &   &    &                    & \eqref{pb:MILP} \text{and} \eqref{eq:Valid_cuts} & 0.61 & Opt &  1.35\\\cline{4-9}
          &   &   & 20 & \text{${10^{48}}$} & \eqref{pb:MILP} & 2.67 & Opt &  326.71 \\  \cline{6-9}
          &   &   &    &                    & \eqref{pb:MILP} \text{and} \eqref{eq:Valid_cuts} & 0.53 & Opt & 5.85 \\ \hline
        3 & 5 & 5 & 10 & \text{$10^{34}$} & \eqref{pb:MILP} & 8.88 & Opt & 12 \\ \cline{6-9}
          &   &   &    &                  & \eqref{pb:MILP} \text{and} \eqref{eq:Valid_cuts} & 2.61 & Opt &  3.58 \\\cline{4-9}
          &   &   & 20 & \text{${10^{69}}$} & \eqref{pb:MILP} & 9.06 & 2.56 & TL \\  \cline{6-9}
          &   &   &    &                    & \eqref{pb:MILP} \text{and} \eqref{eq:Valid_cuts} & 2.45 & Opt & 116.54 \\ \hline
        4 & 8 & 8 & 10 & \text{${10^{72}}$} & \eqref{pb:MILP} & 15.50 & 8.64 & TL \\ \cline{6-9}
          &   &   &    &                    & \eqref{pb:MILP} \text{and} \eqref{eq:Valid_cuts} & 1.77 & Opt &  383.32\\ \cline{4-9}
          &   &   & 20 & \text{${10^{144}}$} & \eqref{pb:MILP} & 15.64 & 12.57 & TL \\ \cline{6-9}
          &   &   &    &                     & \eqref{pb:MILP} \text{and} \eqref{eq:Valid_cuts} & 1.29 & 0.62 & TL \\\hline
  \end{tabular}
 \caption{POMDP results using MILP \eqref{pb:MILP} with and without \eqref{eq:Valid_cuts}, with a time limit TL=$3600$s}
 \label{tab:random_instances_results}
 \end{table}

\subsection{Simulated experiments on decomposable POMDPs}

We now compare two heuristics to solve decomposable POMDPs: 
Algorithm \ref{alg:heuristic} solving Problem \eqref{pb:decPOMDP_LP} without valid inequalities~\eqref{eq:dec_Valid_cuts}, and a greedy algorithm, which infers the most probable state and takes the action maximizing the expected immediate reward of the next state. 
On each instance, 
we compute an upper bound $z_{M,k}^*$ on the optimal value by solving Problem \eqref{pb:decPOMDP_LP} with constraints \eqref{eq:dec_Valid_cuts}.
We test our two heuristics under $K=100$ different scenarios.


Table \ref{tab:random_instances_results_heuristic} summarizes the results obtained. 
The first five columns describe the instance. They indicate the sizes of state space $\big\vert \calX_{S}\big\vert$, observation space $\big\vert \calX_{O}\big\vert$,  and action space $\big\vert \calX_{A}\big\vert$, the horizon time $T$, and the number of components $M$. 
The sixth column indicates the heuristic used. 
The next column provides the average computation time needed at each time step to take a decision. 
When using Algorithm~\ref{alg:heuristic}, this is the time needed to solve MILP~\eqref{pb:decPOMDP_LP}.
Finally, column ``Av.~gap ($\%$)'' provides the gap
$$100 \times \frac{z_{M,k}^* - R_{k,h}}{z_{M,k}^*},$$ 
between the upper bound $z_{M,k}^*$ and total reward $R_{k,h}$ obtained using the heuristic on average on the $K$ scenarios.

\begin{table}[!ht]
  \begin{tabular}{|S|SSS|S|S|S|S|}
        \hline
        \multicolumn{1}{|c}{$M$} & \multicolumn{1}{c}{$\big\vert \calX_{S}\big\vert$} & \multicolumn{1}{c}{$\big\vert \calX_{O}\big\vert$} & \multicolumn{1}{c}{$\big\vert \calX_{A} \big\vert$} & \multicolumn{1}{c}{$T$} & \multicolumn{1}{c}{Heuristic (h)} & \multicolumn{1}{c}{CPU time (s)} & \multicolumn{1}{c|}{Av.~gap ($\%$)}  \\ \hline
        3  & 5 & 5 & 5 & 5 & \text{Greedy} & 0.00 & 22.43  \\
          &  &  &  &  & \text{Alg. \eqref{alg:heuristic}} & 2.52$\rm{e}$-2 & 4.95 \\ \cline{5-8}
         & &  &  & 10 & \text{Greedy} & 0.00 & 21.17 \\
          &   &   &   &   & \text{Alg. \eqref{alg:heuristic}} & 8.07$\rm{e}$-2 & 4.00  \\  \hline
        4  & 3 & 5 & 5 & 5 & \text{Greedy} & 0.00 & 18.53  \\
          &  &  &  &  & \text{Alg. \eqref{alg:heuristic}} & 2.17$\rm{e}$-2 & 8.73 \\ \cline{5-8}
         & &  &  & 10 & \text{Greedy} & 0.00 & 18.40  \\
          &   &   &   &   & \text{Alg. \eqref{alg:heuristic}} & 7.66$\rm{e}$-2 & 7.97  \\  \hline
        5  & 5 & 5 & 5 & 5 & \text{Greedy} & 0.00 & 17.3  \\
          &  &  &  &  & \text{Alg. \eqref{alg:heuristic}} & 5.35e-2 & 7.24 \\ \cline{5-8}
         & &  &  & 10 & \text{Greedy} & 0.00 & 17.9  \\
          &   &   &   &   & \text{Alg. \eqref{alg:heuristic}} & 1.73e-1 & 6.11  \\  \hline
        10 & 5 & 5 & 5 & 5 & \text{Greedy} & 0.00 & 14.7  \\
          &  &  &  &  & \text{Alg. \eqref{alg:heuristic}} & 1.69e-1 & 5.01 \\
        \hline
    \end{tabular}
 \caption{Heuristic performances on decomposable POMDPs ($M>1$)}
 \label{tab:random_instances_results_heuristic}
 \end{table}
 
 \noindent Note that when the number of components grows, Algorithm~\ref{alg:heuristic} outperforms the standard greedy algorithm.


\section*{Acknowledgments}

We are grateful to Fr\'ed\'eric Meunier for his helpful remarks.

\bibliographystyle{plainnat}
\bibliography{coupled_pomdp}

\appendix

\section{Proofs}
\label{sec:app}

\begin{proof}[Proof of Theorem \ref{theo:MILP_optimal_solution}]
	Let $(\boldsymbol{\mu},\boldsymbol{\delta})$ be a feasible solution of Problem \eqref{pb:MILP}. 
	We prove by induction on $t$ that $\mu_s^t = \bbP_{\boldsymbol{\delta}}\big(S_t = s\big)$, $\mu_{sa}^t=\bbP_{\boldsymbol{\delta}}\big(S_t = s, A_t=a\big)$ and $\mu_{soa}^t =\bbP_{\boldsymbol{\delta}}\big(S_t = s, O_t=o, A_t=a\big)$. 
	At time $t=0$, the statement is immediate. Suppose that it holds up to $t-1$. Constraints \eqref{eq:MILP_consistent_state} and induction hypothesis ensure that 
	\begin{align*}
	\mu_s^{t} &= \sum_{s',a'} p(s|s',a') \bbP_{\boldsymbol{\delta}}\big(S_{t-1} = s', A_{t-1} = a \big) \\
			  &= \bbP_{\boldsymbol{\delta}}\big(S_{t} = s\big)
	\end{align*}
	where the last equality comes from the law of total probability. Since $\delta_{a|o}^t$ are binary, constraints \eqref{eq:MILP_McCormick_1}, \eqref{eq:MILP_McCormick_2} and \eqref{eq:MILP_McCormick_3} ensure that :
	\begin{align*}
		\mu_{soa}^t &= \delta_{a|o}^t p(o|s)\mu_s^t \\
					&= \bbP_{\boldsymbol{\delta}}(S_t=s, O_t=o, A_t=a).
	\end{align*}
	Finally, constraints \eqref{eq:MILP_consistent_observation} ensure that $\mu_{sa}^t = \bbP_{\boldsymbol{\delta}}(S_t=s, A_t=a)$. Therefore, any feasible solution $\boldsymbol{\mu}$ of \eqref{pb:MILP} is equal to the distribution $\bbP_{\boldsymbol{\delta}}$.
	Consequently,
	$$\sum_{t=1}^T \sum_{\substack{s,s' \in \calX_S \\ a \in \calX_A}} r(s,a,s') p(s'|s,a) \mu_{sa}^t = \bbE_{\boldsymbol{\delta}} \bigg[ \sum_{t=1}^{T}r(S_t,A_t,S_{t+1})\bigg],$$
	which implies that $\boldsymbol{\delta}$ is optimal if and only if $(\boldsymbol{\mu},\boldsymbol{\delta})$ is optimal for Problem \eqref{pb:MILP} and $v^* = z^*$.



\end{proof}

\begin{proof}[Proof of the validity of equalities\eqref{eq:Valid_cuts}]
	Let $(\boldsymbol{\mu}, \boldsymbol{\delta})$ be a feasible solution of problem \eqref{pb:MILP}. We define
	$$\mu_{s'a'soa}^t = \delta^t_{a|o}p(o|s)p(s|s',a')\mu_{s'a'}^{t-1}$$ for all $(s',a',s,o,a) \in \calX_S \times \calX_A \times \calX_S \times \calX_O \times \calX_A$, $t \in [T]$. These new variables satisfy constraints in \eqref{eq:Valid_cuts} :
	\begin{align*}
		\sum_{a \in \calX_A} \mu_{s'a'soa}^t 
		&= (\sum_{a \in \calX_A} \delta^t_{a|o})p(o|s)p(s|s',a')\mu_{s'a'}^{t-1} \\
		&= p(o|s)p(s|s',a')\mu_{s'a'}^{t-1}\\
		\sum_{a' \in \calX_A, s' \in \calX_S} \mu_{s'a'soa}^t &= (\sum_{a' \in \calX_A, s' \in \calX_S} p(s|s',a')\mu_{s'a'}^{t-1}) \delta^t_{a|o} p(o|s) \\
						&= \mu_{s}^{t} \delta^t_{a|o} p(o|s) \\
						&= \mu_{soa}^t
	\end{align*}

	\noindent The remaining constraint \eqref{eq:Valid_cuts_main} is obtained using the following observation :
	\begin{align*}
		\frac{\mu_{s'a'soa}^t}{\displaystyle \sum_{\overline{s} \in \calX_S} \mu_{s'a'\overline{s}oa}^t} = \frac{p(o|s)p(s|s',a')}{\displaystyle \sum_{\overline{s} \in \calX_S} p(o|\overline{s})p(\overline{s}|s',a')}
	\end{align*}
	By setting $p(s|s',a',o) = \frac{\displaystyle p(o|s)p(s|s',a')}{\displaystyle \sum_{\overline{s} \in \calX_S} p(o|\overline{s})p(\overline{s}|s',a')}$, equality \eqref{eq:Valid_cuts_main} holds. 	
\end{proof}

\subsection{Decomposable Partially Observed Markov Decision Processes}

\begin{proof}[Proof of Theorem \ref{theo:relaxation_dec_POMDPs}]
	We prove that an optimal solution $(\boldsymbol{\mu}, \boldsymbol{\delta})$ of Problem \eqref{pb:MILP} is a feasible solution of Problem \eqref{pb:decPOMDP_LP}. For each $m \in [M]$, we define $\displaystyle \tau_s^{t,m} = \sum_{\textbf{s}^{-m} \in \prod_{j \neq m} \calX_S^j} \mu_{\textbf{s}}^t$ where $\textbf{s}^{-m}$ is the vector $\textbf{s}$ without the $m$-th coordinate corresponding to component $m$. Similarly, we define $\displaystyle \tau_{sa}^{t,m} = \sum_{\textbf{s}^{-m} \in \prod_{j \neq m} \calX_S^j} \mu_{\textbf{s}a}^t$, $\displaystyle \tau_{soa}^{t,m} = \sum_{\substack{\textbf{s}^{-m} \in \prod_{j \neq m} \calX_S^j \\ \textbf{o}^{-m} \in \prod_{j \neq m} \calX_O^j}} \mu_{\textbf{so}a}^t$ and $\displaystyle \mu_a^t = \sum_{\textbf{s} \in \calX_S} \mu_{\textbf{s}a}^t$. This solution is indeed a feasible solution of Problem \eqref{pb:decPOMDP_LP}.
\end{proof}

\begin{proof}[Proof of Proposition \ref{prop:equality_dec_POMDPs}]
	We prove that we can build a feasible solution of Problem \eqref{pb:MILP} for $M$ components. We build such a solution by induction on $t$. For $t=1$, we define $\mu_{\textbf{s}}^1 = \prod_{m=1}^M \tau_{s}^{1,m}$,  $\mu_{\textbf{so}}^1 = p(\textbf{o}|\textbf{s})\mu_{\textbf{s}}^1$, $\mu_{\textbf{soa}}^1 = \delta^1_{a|\textbf{o}}\mu_{\textbf{so}}^1$ and $\mu_{\textbf{s}a}^1 = \sum_{\textbf{o} \in \calX_O} \mu_{\textbf{so}a}^1$. We define the following induction equation :
	\begin{align*}
		&\mu_{\textbf{s'}}^{t+1} = \sum_{\textbf{s} \in \calX_S} \sum_{a \in \calX_A} p(\textbf{s'}|\textbf{s},a) \mu_{\textbf{s}a}^t \\
		&\mu_{\textbf{so}}^{t+1} = p(\textbf{o}|\textbf{s})\mu_{\textbf{s}}^{t+1} \\
		&\mu_{\textbf{so}a}^{t+1} = \delta^{t+1}_{a|\textbf{o}}\mu_{\textbf{so}}^{t+1} \\
		&\mu_{\textbf{s}a}^{t+1} = \sum_{\textbf{o} \in \calX_O} \mu_{\textbf{so}a}^{t+1}
	\end{align*}
	It is easy to observe that all constraints of problem \eqref{pb:MILP} are satisfied. Therefore, we build a feasible solution of Problem \eqref{pb:MILP} with the same expected reward. Since Problem \eqref{pb:decPOMDP} can be exactly solved by Problem \eqref{pb:MILP} by considering the complete system and the probabilities over $\calX_S$ and $\calX_O$. Using Proposition \eqref{theo:relaxation_dec_POMDPs}, there is equality $z_M = v_M$. 
\end{proof}

\begin{proof}[Proof of Theorem \eqref{theo:Initial_deterministic_pol}]
	We define $\boldsymbol{\delta}^* = (\overline{\delta}, \delta^2, \ldots, \delta^T)$. We prove that :
	\begin{align*}
		\bbE_{\boldsymbol{\delta}} \bigg[ \sum_{t=1}^T r(\mathbf{S}_t,A_t,\mathbf{S}_{t+1}) | \mathbf{o}_1 = \tilde{\mathbf{o}} \bigg] = \bbE_{\boldsymbol{\delta}^*} \bigg[\sum_{t=1}^T r(\mathbf{S}_t,A_t,\mathbf{S}_{t+1}) | \mathbf{o}_1 = \tilde{\mathbf{o}}\bigg]
	\end{align*}
	We have :
	\begin{align*}
		\bbE_{\boldsymbol{\delta}} \bigg[ \sum_{t=1}^T r(\mathbf{S}_t,A_t,\mathbf{S}_{t+1}) | \mathbf{o}_1 = \tilde{\mathbf{o}}\bigg] &= \bbE_{\boldsymbol{\delta}} \bigg[r(s_1,a_1,s_2) | \mathbf{o}_1 = \tilde{\mathbf{o}}\bigg] + \bbE_{\boldsymbol{\delta}} \bigg[ \sum_{t=2}^T r(\mathbf{S}_t,A_t,\mathbf{S}_{t+1}) | \mathbf{o}_1 = \tilde{\mathbf{o}}\bigg] \\
							  &= \sum_{a \in \calX_A} \delta^1_{a|\tilde{\mathbf{o}}} \sum_{\mathbf{s}, \mathbf{s'} \in \calX_S} p(\mathbf{s}'|\mathbf{s},a) p(\tilde{\mathbf{o}} | \mathbf{s}) p(\mathbf{s}) \bigg( r(\mathbf{s},a,\mathbf{s}') \\
							  &+ \bbE_{\boldsymbol{\delta}} \bigg[ \sum_{t=2}^T r(\mathbf{S}_t,A_t,\mathbf{S}_{t+1}) | \mathbf{o}_1 = \tilde{\mathbf{o}}, \mathbf{s}_2 = \mathbf{s}'\bigg] \bigg)\\
							  &= \sum_{a \in \calX_A} \delta^1_{a|\tilde{\mathbf{o}}} \alpha(a, \tilde{\mathbf{o}})
	\end{align*}
	where $$\alpha(a, \tilde{\mathbf{o}}) = \sum_{\mathbf{s}, \mathbf{s'} \in \calX_S} p(\mathbf{s}'|\mathbf{s},a) p(\tilde{\mathbf{o}} | \mathbf{s}) p(\mathbf{s}) \bigg( r(\mathbf{s},a,\mathbf{s}')+ \bbE_{\boldsymbol{\delta}} \bigg[ \sum_{t=2}^T r(\mathbf{S}_t,A_t,\mathbf{S}_{t+1}) | \mathbf{o}_1 = \tilde{\mathbf{o}}, \mathbf{s}_2 = \mathbf{s}'\bigg] \bigg)$$ Since all variables $(\mathbf{S}_t, \mathbf{O}_t, A_t)_{2 \leq t \leq T+1}$ are conditionally independent from $(S_1,O_1,A_1)$ given $S_2$ we have :
	$$\alpha(a, \tilde{\mathbf{o}}) = \sum_{\mathbf{s}, \mathbf{s'} \in \calX_S} p(\mathbf{s}'|\mathbf{s},a) p(\tilde{\mathbf{o}} | \mathbf{s}) p(\mathbf{s}) \bigg( r(\mathbf{s},a,\mathbf{s}')+ \bbE_{\boldsymbol{\delta}^{-1}} \bigg[ \sum_{t=2}^T r(\mathbf{S}_t,A_t,\mathbf{S}_{t+1}) | \mathbf{s}_2 = \mathbf{s}'\bigg] \bigg)$$
	where $\boldsymbol{\delta}^{-1} = \big( \delta^2, \ldots, \delta^T \big)$. Therefore, $\alpha$ does not depend on $\delta^1$. 
	Note that :
	\begin{align*}
		\tau_a^1 &= \sum_{s \in \calX_S^m,o \in \calX_O^m} \tau_{soa}^{1,m} \\
				 &= \sum_{s \in \calX_S^m} \tau_{s\tilde{o}^ma}^{1,m} \\
				 &= \delta^1_{a|\tilde{\mathbf{o}}} \sum_{s \in \calX_S^m, \overline{a} \in \calX_A} \tau_{s\tilde{o}^m\overline{a}}^{1,m} \\
				 &= \delta^1_{a|\tilde{\mathbf{o}}}
	\end{align*}
	where the last equality holds because the decision maker observes the initial observation. Therefore, we obtain :
	\begin{align*}
		\bbE_{\boldsymbol{\delta}} \bigg[ \sum_{t=1}^T r(s_t,a_t,s_{t+1}) | \mathbf{o}_1 = \tilde{\mathbf{o}}\bigg] &= \sum_{a \in \calX_A} \alpha(a,\tilde{\mathbf{o}}) \tau_a^1 \\
								&= \max_{a \in \calX_A} \alpha(a,\tilde{\mathbf{o}}) 
	\end{align*}
	where the last equality holds because the right-hand side is always greater than the left-hand side since $\sum_{a \in \calX_A} \tau_a^1 = 1$ and if $\tau_a^1 >0$ then $a \in \argmax_{a \in \calX_A} \alpha(a, \tilde{\mathbf{o}})$. Indeed, suppose that $\tau_a^1 > 0$ and $a \notin \argmax_{a \in \calX_A}  \alpha(a, \tilde{\mathbf{o}})$. Let $a^* \in \argmax_{a \in \calX_A}  \alpha(a, \tilde{\mathbf{o}})$, then the solution $\tilde{\tau}_a = 0$ and $\tilde{\tau}_{a^*} = \tau_{a^*} + \tau_a$. Therefore,
	$$\sum_{a \in \calX_A} \tau_a^1 \alpha(a,\tilde{\mathbf{o}}) < \sum_{a \in \calX_A} \tilde{\tau}_a^1 \alpha(a,\tilde{\mathbf{o}}) $$
	which contradicts the optimality assumption of $\tau$.
	We deduce that :
	\begin{align*}
	 	\bbE_{\boldsymbol{\delta}} \bigg[ \sum_{t=1}^T r(s_t,a_t,s_{t+1}) | \mathbf{o}_1 = \tilde{\mathbf{o}}\bigg] = \bbE_{\boldsymbol{\delta}^*} \bigg[ \sum_{t=1}^T r(s_t,a_t,s_{t+1}) | \mathbf{o}_1 = \tilde{\mathbf{o}}\bigg] 
	 \end{align*} 
	 because $\max_{a \in \calX_A} \tau_a^1 > 0$ since $\sum_{a \in \calX_A} \tau_a^1 = 1$.
\end{proof}









\end{document}